\documentclass[12pt]{article}
\usepackage{amssymb}
\usepackage[mathscr]{eucal}
\usepackage{amsthm,amsmath,anysize}
\def\a{\alpha}

\def\l{\lambda}
\def\g{\gamma}
\def\0{\bar{0}}
\def\1{\bar{1}}

\def\g{\mathfrak{g}}

\def\g{{\mathfrak g}}

\newtheorem{lemma}{Lemma}[section]
\newtheorem{theorem}[lemma]{Theorem}

\newtheorem{corollary}[lemma]{Corollary}

\hoffset \voffset \oddsidemargin=60pt \evensidemargin=45pt
\topmargin=16pt \headheight=12pt \headsep=25pt \voffset-3em
\hoffset-4em \textwidth=150mm \textheight=230mm
\parindent=2em
\parskip=0.5em

\title{ On the simplicity of  induced modules for  reductive Lie algebras. \rm{II}}
\author{Chaowen Zhang
\\ Department of
Mathematics,\\ China University
 of Mining and Technology,\\ Xuzhou, 221116, Jiang Su, P. R. China}

\date{ }

\begin{document}
\maketitle

Keywords and phrases: reductive Lie algebras, parabolic subalgebras, induced modules, simple modules.\par
Mathematics Subject Classification 2010: 17B10; 17B45; 17B50\par

\section{Introduction}
  Let $G$ be a reductive algebraic group defined over an algebraically closed field $\mathbf F$ of positive characteristic $p$, and let $\g$ be the Lie algebra of $G$.  In \cite[5.1]{fp2}, Friedlander and Parshall  asked to find necessary and sufficient conditions for the simplicity of a $\g$-module with $p$-character $\chi\in \g^*$ that is induced from a simple module for a parabolic subalgebra of $\g$.  This question has been answered (by V. Kac) when $\g$ is of type $A_2$ (see \cite[Example 3.6]{fp2} and \cite{kac}), and also when $\g$ is of type $A_3$ (see \cite{lsy}).  When $\g$ is of type $A_n$, $B_n$, $C_n$ or $D_n$ and when $\chi$ is of standard Levi form, a sufficient condition is given in \cite{lsy2} for the simplicity of above-mentioned $\g$-modules.
  Under certain assumption of $\chi$, a necessary and sufficient condition for the simplicity of these $\g$-modules
  is given in \cite{zh}. In this paper we give a sufficient condition for the simplicity of these $\g$-modules
  with a  general assumption of $\chi$.    \par
   Following \cite[6.3]{j} we make the following hypotheses:\par
   (H1) The derived group $DG$ of $G$ is simply connected;\par
   (H2) The prime $p$ is good for $\g$;\par
   (H3) There exists a $G$-invariant non-degenerate bilinear form on $\g$.\par
   Let $T$ be a maximal torus of $G$, let $\mathfrak h=\mbox{Lie}(T)$, and let $\Phi$ be the root system of $G$.  Let $\Pi=\{\a_1, \dots,\a_l\}$ be a base of $\Phi$ and let  $\Phi^+$ be the set of positive    roots relative to $\Pi$. For each $\a\in\Phi^+$ let $\g_{\a}$ denote the corresponding root space of $\g$.   According to \cite[6.1]{j} we have  $\g=\mathfrak n^-+\mathfrak h+\mathfrak n^+$, where $$\mathfrak n^+=\sum_{\a\in\Phi^+}\g_{\a}, \quad \mathfrak n^-=\sum_{\a\in\Phi^+}\g_{-\a}.$$    Fix a  proper subset $I$ of $\Pi$ and put $\Phi_I=\mathbb ZI\cap\Phi$ and $ \Phi^+_I=\Phi_I\cap \Phi^+.$  Define   $\tilde\g_I=\mathfrak h+\sum_{\a\in\Phi_{I}}\g_{\a}$, as well as
$$\mathfrak u=\sum_{\a\in \Phi^+\setminus \Phi^+_I}\g_{\a},\quad \mathfrak u'=\sum_{\a\in \Phi^+\setminus \Phi^+_I}\g_{-\a}.$$ Then $\mathfrak p_I=\tilde\g_I+\mathfrak u$  is a parabolic subalgebras of $\g$ with Levi factor $\tilde\g_I$ \cite[10.6]{j}.
  Throughout the paper we assume  that $\chi (\mathfrak n^+)=0$. This is done without loss of generality due to \cite[Lemma 6.6]{j}.\par For any restricted Lie subalgebra $L$ of $\g$,  we denote by $u_{\chi}(L)$ the {\it $\chi$-reduced enveloping algebra} of $L$, where we continue to use $\chi$ for the restriction of $\chi$ to $L$ (\cite[5.3]{sf}).  If $\chi =0$,  then $u_{\chi}(L)$ is referred to as the {\it restricted enveloping algebra} of $L$, and denoted more simply  by $u(L)$. Let $L^{\chi}_I(\lambda)$ be a simple $u_{\chi}(\mathfrak p_I)$-module generated by a maximal vector $v_{\lambda}$ of weight $\lambda\in \mathfrak h^*$. Define the induced $u_{\chi}(\g)$-module $$Z^{\chi}_I(\lambda)=u_{\chi}(\g)\otimes _{u_{\chi}(\mathfrak p_I)}L^{\chi}_I(\lambda).$$ \par
     The main result of the present paper is Theorem 2.10, which  gives a sufficient condition for   $Z^{\chi}_I(\lambda)$ to be simple;  we show that $Z^{\chi}_I(\lambda)$ is simple  if $\lambda$ is not a zero of a certain polynomial $R^I_{\g}(\lambda)$. Then we give an application of this conclusion.

\section{Simplicity criterion}In this section, we keep the assumptions as in the introduction.  Let $$\{e_{\a}, h_{\beta}|\a\in\Phi, \beta\in\Pi\}$$ be a  Chevalley basis for $\g'=\text{Lie}(DG)$ such that $$[e_{\a}, e_{\beta}]=\pm (r+1)e_{\a+\beta},\quad \mathbin{\mathrm{if}}\quad\a,\ \beta, \ \a+\beta\in \Phi^+,$$ where $r$ is the greatest integer for which $\beta-r\a\in\Phi$ (see \cite[Theorem 25.2]{hu}). As pointed out in \cite[Section 3]{zh}, our assumption on $p$ ensures that $(r+1)\neq 0$.\par  For $\a\in \Phi^+$ put $f_{\a}=-e_{\a}$. Then we have $\g_{\a}=\mathbf Fe_{\a}$ and $\g_{-\a}=\mathbf Ff_{\a}$ for every $\a\in\Phi^+$. \par

 Remark: Let $\a,\beta\in\Phi^+$ such that $\a+\beta\in\Phi^+$ (resp. $\beta-\a\in\Phi^+$). Then we have $$[e_{\a}, e_{\beta}]=ce_{\a+\beta}, \quad [f_{\a}, f_{\beta}]=-cf_{\a+\beta} \ (\text{resp. $[e_{\a}, f_{\beta}]=cf_{\beta-\a}$})$$ for some $c\in\mathbf F\setminus 0$. For  brevity, we omit the scalar $c$. This does not affect any of the proofs in this section.\par
Recall from the introduction  the notation $\mathfrak p_I, \mathfrak u, \mathfrak u'$, $\tilde\g_I$, and $L^{\chi}_I(\lambda)$. Using (H3), we can show that $\mathfrak u$ is the nilradical of the parabolic subalgebra $\mathfrak p_I$.
By \cite[Corollary 1.3.8]{sf},  $L^{\chi}_I(\lambda)$ is  annihilated by $\mathfrak u$, and hence is a simple $u_{\chi}(\tilde\g_I)$-module.
 The simple $u_{\chi}(\mathfrak p_I)$-module $L^{\chi}_I(\lambda)$ is generated by any of  its maximal vectors, which may not  be unique under our assumption. \par    Recall from the introduction the induced $u_{\chi}(\g)$-module $Z_I^{\chi}(\lambda)$.  By the PBW theorem for the $\chi$-reduced enveloping algebra $u_{\chi}(\g)$ (\cite[Theorem 5.3.1]{sf}), we have $$Z_I^{\chi}(\lambda)\cong u_{\chi}(\mathfrak u')\otimes_{\mathbf F}L^{\chi}_{I}(\lambda)$$ as $u_{\chi}(\mathfrak u')$-modules. \par
    Let $\Phi^+\setminus \Phi^+_I=\{ \beta_1,\beta_2,\dots, \beta_k\}$, and let $v_1,\dots, v_n$ be a basis of $L^{\chi}_I(\lambda)$.  Then $Z^{\chi}_I(\lambda)$ has a basis $$f_{\beta_1}^{l_1}f_{\beta_2}^{l_2}\cdots f_{\beta_k}^{l_k}\otimes v_j,\quad 0\leq l_i\leq p-1,\ i=1,\dots,k,\ j=1,\dots, n.$$
 Let $\mathfrak S$ be a subset of $\Phi^+$. We say that $\mathfrak S$ is a {\it closed subset} if $\a+\beta\in \mathfrak S$ for any $\a, \beta\in \mathfrak S$ such that $\a+\beta\in \Phi^+$. Therefore, $\Phi^+\setminus \Phi^+_I$ is a closed subset of $\Phi^+$. We see that $\mathfrak S$ is a closed subset of $\Phi^+$ if and only if \ $\mathfrak s=:\sum_{\a\in\mathfrak S}\g_{\a}$ \ is a Lie subalgebra of $\g$;  it is clear that $\mathfrak s$ is restricted.\par Let $\mathfrak S$ be a closed  subset of $\Phi^+.$ \ Applying almost verbatim Humphreys's argument
 in the proof of \cite[Lemma 1.4]{hu1}, we get the following result.
 \begin{lemma} Let $(\a_1,\dots, \a_m)$ be any ordering of $\mathfrak S$. If $ht(\a_k)=h$, assume that all exponents $i_j$ in \ $e_{\a_1}^{i_1}\cdots e_{\a_m}^{i_m}\in u(\mathfrak s)$ \ for which $ht(\a_j)\geq h$ are equal to $p-1$. Then, if $e_{\a_k}$ is inserted anywhere into this expression, the result is $0$.
 \end{lemma}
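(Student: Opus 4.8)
The plan is to argue by downward induction on the height $h=ht(\alpha_k)$, the essential algebraic input being that in the \emph{restricted} enveloping algebra $u(\mathfrak s)$ one has $e_\alpha^p=0$ for every $\alpha\in\mathfrak S$. Indeed $e_\alpha^{[p]}$ is a $T$-weight vector of weight $p\alpha$, and since $p\alpha\notin\Phi\cup\{0\}$ we get $e_\alpha^{[p]}\in\g_{p\alpha}=0$, whence $e_\alpha^p=e_\alpha^{[p]}=0$ in $u(\mathfrak s)$. The second ingredient is that commuting two positive root vectors strictly raises height: $[e_{\alpha_k},e_{\gamma}]\in\g_{\alpha_k+\gamma}$, so it is a nonzero scalar multiple of $e_{\alpha_k+\gamma}$ when $\alpha_k+\gamma\in\Phi^+$ (in which case $\alpha_k+\gamma\in\mathfrak S$ by closedness and $ht(\alpha_k+\gamma)>h$), and is $0$ otherwise. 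Write $M=e_{\alpha_1}^{i_1}\cdots e_{\alpha_m}^{i_m}$ for the given monomial.

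For the inductive step, fix $\alpha_k$ with $ht(\alpha_k)=h$ and assume the assertion for all roots of height $>h$. Since $(\alpha_1,\dots,\alpha_m)$ is an ordering of $\mathfrak S$, the letter $e_{\alpha_k}$ occurs in $M$ in a single block $e_{\alpha_k}^{i_k}=e_{\alpha_k}^{p-1}$, the exponent being $p-1$ because $ht(\alpha_k)\ge h$. Having inserted an extra $e_{\alpha_k}$ somewhere, I would transport it through the monomial toward this block—rightward if it was inserted to the left of the block, leftward otherwise. Each time it is swapped past a neighbouring factor $e_{\gamma}$ one writes $e_{\alpha_k}e_{\gamma}=e_{\gamma}e_{\alpha_k}+[e_{\alpha_k},e_{\gamma}]$; the straightening term keeps $e_{\alpha_k}$ moving toward its block, while the commutator term is, up to scalar, the result of inserting $e_{\alpha_k+\gamma}$ (a root vector of height $>h$) into the monomial obtained from $M$ by deleting one copy of $e_{\gamma}$. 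Once the transported $e_{\alpha_k}$ reaches its block it merges into $e_{\alpha_k}^{p}=0$, so the only potentially surviving contributions are the commutator terms.

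The crux is to see that every commutator term vanishes by the inductive hypothesis. The monomial into which $e_{\alpha_k+\gamma}$ is inserted differs from $M$ only in that the exponent of $e_{\gamma}$ has dropped by one; but $ht(\gamma)<ht(\alpha_k+\gamma)$, so every factor of height $\ge ht(\alpha_k+\gamma)$ is unchanged and hence—being of height $\ge h$ in $M$—still carries the exponent $p-1$. Thus the hypothesis of the lemma, read at the larger height $ht(\alpha_k+\gamma)$, is met, and the inductive hypothesis forces that term to be $0$. In the base case, where $h$ is the maximal height occurring in $\mathfrak S$, no such larger root exists, so $\alpha_k+\gamma\notin\Phi^+$ and every commutator already vanishes outright, leaving only the $e_{\alpha_k}^p=0$ term. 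I expect the only real work to be this bookkeeping—verifying that the exponent condition survives the passage to each commutator term—rather than any new computation; the remainder is the straightening procedure already used by Humphreys in \cite[Lemma 1.4]{hu1}.
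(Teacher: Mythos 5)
Your proof is correct and takes essentially the same route as the paper: the paper gives no argument of its own, instead invoking Humphreys's proof of \cite[Lemma 1.4]{hu1}, which is precisely your downward induction on height with the inserted factor transported to its block where it merges into $e_{\alpha_k}^{p}=0$. The ingredients you supply---$e_{\alpha}^{p}=e_{\alpha}^{[p]}=0$ in $u(\mathfrak{s})$, closedness of $\mathfrak{S}$ forcing each commutator term to be an insertion of a strictly taller root vector, and the exponent hypothesis surviving because only the exponent of the shorter root $\gamma$ drops---are exactly the ingredients of that argument.
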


\begin{corollary} Let $\Phi^+\setminus\Phi^+_I=\{\beta_1, \dots, \beta_k\}$. For $e_{\beta_1}^{p-1}\cdots e_{\beta_k}^{p-1}\in u(\mathfrak u)$, we have in $u_{\chi}(\g)$ that
  $$(1)\ [e_{\a}, e_{\beta_1}^{p-1}\cdots e_{\beta_k}^{p-1}]=0, \ (2)\ [f_{\a}, e_{\beta_1}^{p-1}\cdots e_{\beta_k}^{p-1}]=0$$ for every $\a\in \Phi^+_I$.
\end{corollary}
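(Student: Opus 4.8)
The plan is to expand each bracket by the derivation property of $\text{ad}$ and to reduce the whole statement to the vanishing of individual monomials, which I then attack with Lemma~2.1. Writing $E=e_{\b_1}^{p-1}\cdots e_{\b_k}^{p-1}$ and using that $\text{ad}(x)$ is a derivation of $u_\chi(\g)$, I obtain, for $x\in\{e_\a,f_\a\}$ with $\a\in\Phi^+_I$,
$$[x,E]=\sum_{i=1}^{k}\sum_{s=0}^{p-2}e_{\b_1}^{p-1}\cdots e_{\b_i}^{s}\,[x,e_{\b_i}]\,e_{\b_i}^{p-2-s}\cdots e_{\b_k}^{p-1}.$$
The root-theoretic input I would record first is that $[e_\a,e_{\b_i}]\in\g_{\a+\b_i}$ and $[f_\a,e_{\b_i}]\in\g_{\b_i-\a}$, and that whenever $\a+\b_i$ (resp. $\b_i-\a$) is a root it again lies in $\Phi^+\setminus\Phi^+_I$: indeed $\b_i\notin\bbz I$ while $\a\in\bbz I$, so neither the sum nor the difference can fall into $\bbz I$, and the (nonzero) coefficient of $\b_i$ on a simple root outside $I$ is unchanged, which also forces the difference to remain positive. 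Thus each term of $[x,E]$ is, up to the suppressed scalar, obtained from $E$ by dropping one exponent to $p-2$ and inserting a single $e_\g$ with $\g\in\Phi^+\setminus\Phi^+_I$, where $ht(\g)=ht(\b_i)+ht(\a)$ in case $(1)$ and $ht(\g)=ht(\b_i)-ht(\a)$ in case $(2)$.

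For $(1)$ the inserted root $\g=\a+\b_i$ has height strictly greater than $ht(\b_i)$. Hence in the monomial into which $e_\g$ is inserted -- that is, $E$ with the $\b_i$-exponent lowered to $p-2$ -- the reduced factor $\b_i$ has height below $ht(\g)$, while every root of height $\geq ht(\g)$ still occurs to the power $p-1$ (in particular $\g$ itself, which appears as some $\b_j\neq\b_i$). This is precisely the hypothesis of Lemma~2.1 with $h=ht(\g)$, so the insertion yields $0$. Since this holds for every term, $[e_\a,E]=0$, proving $(1)$.

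For $(2)$ the situation is genuinely harder, and this is the step I expect to be the main obstacle: now $\g=\b_i-\a$ has height strictly smaller than $ht(\b_i)$, so the reduced factor $\b_i$ sits above $\g$ in height and Lemma~2.1 no longer applies verbatim. The idea I would pursue exploits that $\g=\b_i-\a$ already occurs to the power $p-1$ in $E$, so after insertion the total exponent of $e_\g$ is $p$; and since $\chi(\mathfrak n^+)=0$ forces $\chi|_{\mathfrak u}=0$, we have $e_\g^{\,p}=0$ in $u_\chi(\g)$ because additionally $e_\g^{[p]}=0$ for a root vector. I would therefore move the inserted $e_\g$ through the intervening factors to collect it with its own block $e_\g^{p-1}$, producing $e_\g^{\,p}=0$. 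The cost of each transposition is a commutator $[e_\g,e_\d]\in\g_{\g+\d}$ in which the moving factor is replaced by one of strictly greater height; organizing these by a downward induction on the height of the moving root, each such term becomes an insertion to which Lemma~2.1 does apply and hence vanishes. The delicate part -- the crux of the argument -- is the bookkeeping that keeps the inductive hypothesis available as new reduced factors are created along the way. An alternative that sidesteps this is to observe that the longest element $w_{0,I}$ of the Weyl group of $\t\g_I$ permutes $\Phi^+\setminus\Phi^+_I$ and interchanges the positive and negative root spaces of $\t\g_I$, so that $(2)$ can be deduced from $(1)$ once one checks that $E$ is, up to a unit, semi-invariant under $\Ad(\dot w_{0,I})$.
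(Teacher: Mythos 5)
Your part (1) is correct and is precisely the paper's own argument: the paper disposes of (1) with the single remark that it is immediate from Lemma 2.1, and your expansion of $[e_{\alpha},E]$, together with the observation that each inserted root $\alpha+\beta_i$ lies in $\Phi^+\setminus\Phi^+_I$ and has height strictly greater than $\mathrm{ht}(\beta_i)$ (so the one exponent lowered to $p-2$ sits below the threshold $h=\mathrm{ht}(\alpha+\beta_i)$ and the hypothesis of Lemma 2.1 holds), is exactly that argument written out in detail.

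Part (2) contains a genuine gap. The paper does not reprove (2); it simply cites \cite[Lemma 3.5]{zh}. Your substitute argument --- slide the inserted $e_{\gamma}$, $\gamma=\beta_i-\alpha$, over to its own block so as to form $e_{\gamma}^{p}=0$, killing the commutator terms by downward induction on the height of the moving root --- does not close, and the specific claim that ``each such term becomes an insertion to which Lemma 2.1 does apply'' is false. When the moving $e_{\gamma}$ passes a factor $e_{\beta_t}$ with $\mathrm{ht}(\beta_t)\le \mathrm{ht}(\alpha)$ and $\beta_t+\gamma\in\Phi$, the new insertion $e_{\beta_t+\gamma}$ has height $\mathrm{ht}(\beta_t)+\mathrm{ht}(\beta_i)-\mathrm{ht}(\alpha)\le \mathrm{ht}(\beta_i)$, while the word it lands in still carries the deficient exponent $p-2$ on $e_{\beta_i}$; so the hypothesis of Lemma 2.1 fails at that threshold, and your inductive hypothesis is not available either. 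Concretely, in type $A_3$ with $I=\{\alpha_1\}$, $\alpha=\alpha_1$, $\beta_i=\alpha_1+\alpha_2$, $\gamma=\alpha_2$: sliding $e_{\alpha_2}$ past $e_{\alpha_3}^{p-1}$ creates insertions of $e_{\alpha_2+\alpha_3}$ (height $2$) into a word in which $e_{\alpha_1+\alpha_2}$ (also height $2$) occurs only $p-2$ times. These are exactly the terms your scheme cannot reach, and deferring them as ``bookkeeping'' does not discharge them --- they are the entire content of statement (2). The Weyl-group alternative is in no better shape: $\text{Ad}(\dot w_{0,I})$ sends $E$ to a product of all $e_{w_{0,I}\beta_j}^{p-1}$ taken in a permuted order, and reordering such a product a priori changes it by lower-filtration terms, so the semi-invariance of $E$ you would need is an unproved claim of essentially the same difficulty. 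For what it is worth, a short complete proof of (2) can be given by the weight argument the paper itself uses for Lemmas 2.7 and 2.8: since $\mathfrak u$ is an ideal of $\mathfrak p_I$ and $\chi(\mathfrak u)=0$, $\text{ad}(f_{\alpha})$ preserves the subalgebra $u(\mathfrak u)\subseteq u_{\chi}(\g)$ and shifts $T$-weights by $-\alpha$; hence $[f_{\alpha},E]$ lies in the $T$-weight space of $u(\mathfrak u)$ of weight $(p-1)\sum_j\beta_j-\alpha$, and that space is zero, because $\sum_j(p-1-l_j)\beta_j=\alpha$ with $0\le l_j\le p-1$ forces $l_j=p-1$ for every $j$ (compare coefficients of the simple roots outside $I$), which is absurd.
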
 \begin{proof} (1) is immediate from Lemma 2.1.\par (2) is given by \cite[Lemma 3.5]{zh}.
\end{proof}
In the following we assume $\beta_1, \dots, \beta_k$ in $\Phi^+\setminus \Phi_I^+$ is in the order of ascending heights. By \cite[Theorem 3.3.1]{sf}, $u_{\chi}(\mathfrak u')$ has a basis $$f_{\beta_1}^{l_1}\cdots f_{\beta_k}^{l_k},\quad 0\leq l_i\leq p-1.$$
\begin{lemma}  Let $1\leq s\leq k$, and let $f_{\beta_s}^{l_s}\cdots f_{\beta_k}^{l_k}\in u_{\chi}(\mathfrak u'),\ 0\leq l_s, \dots, l_k\leq p-1$. For any $j\geq s$, the product $f_{\beta_j}f_{\beta_s}^{l_s}\cdots f_{\beta_k}^{l_k}$ equals a linear combination of elements $f_{\beta_s}^{i_s}\cdots f_{\beta_k}^{i_k}$ such that $0\leq i_s, \dots, i_k\leq p-1$.
\end{lemma}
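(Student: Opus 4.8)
The plan is to deduce the claim from the PBW theorem once the right restricted subalgebra has been isolated. Set
$\mathfrak u'_s := \sum_{i=s}^{k}\g_{-\beta_i} = \mathrm{span}_{\mathbf F}\{f_{\beta_s},\dots,f_{\beta_k}\}$.
The whole lemma becomes transparent provided $\{\beta_s,\dots,\beta_k\}$ is a closed subset of $\Phi^+$, for then $\mathfrak u'_s$ is a restricted Lie subalgebra of $\mathfrak u'$ and the product in question is forced to stay inside $u_\chi(\mathfrak u'_s)$, whose basis is exactly the monomials $f_{\beta_s}^{i_s}\cdots f_{\beta_k}^{i_k}$.

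First I would verify closedness, which is the only real content. Suppose $i,j\geq s$ and $\beta_i+\beta_j\in\Phi^+$. Since $\Phi^+\setminus\Phi^+_I$ is closed we have $\beta_i+\beta_j=\beta_m$ for some $m$, and $ht(\beta_m)=ht(\beta_i)+ht(\beta_j)\geq ht(\beta_s)+1>ht(\beta_s)$, using $i,j\geq s$ together with the ascending-height ordering. The indices $t$ with $ht(\beta_t)\leq ht(\beta_s)$ form an initial segment $\{1,\dots,T\}$ with $T\geq s$; as $ht(\beta_m)>ht(\beta_s)$ we get $m>T\geq s$, so $\beta_m\in\{\beta_s,\dots,\beta_k\}$. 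Hence $\{\beta_s,\dots,\beta_k\}$ is closed, and by the Remark's relation $[f_{\beta_i},f_{\beta_j}]=\pm f_{\beta_i+\beta_j}\in\mathfrak u'_s$, so $\mathfrak u'_s$ is a Lie subalgebra; it is restricted since $\g_{-\beta_i}^{[p]}\subseteq\g_{-p\beta_i}=0$.

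Next I would invoke PBW. By \cite[Theorem 3.3.1]{sf}, the inclusion $\mathfrak u'_s\hookrightarrow\mathfrak u'$ of restricted Lie algebras induces an embedding $u_\chi(\mathfrak u'_s)\hookrightarrow u_\chi(\mathfrak u')$, with $u_\chi(\mathfrak u')$ free over $u_\chi(\mathfrak u'_s)$ on the complementary monomials $f_{\beta_1}^{l_1}\cdots f_{\beta_{s-1}}^{l_{s-1}}$; moreover $u_\chi(\mathfrak u'_s)$ has $\mathbf F$-basis the standard monomials $f_{\beta_s}^{i_s}\cdots f_{\beta_k}^{i_k}$ with $0\leq i_t\leq p-1$. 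Both factors of the product now lie in this subalgebra: $f_{\beta_s}^{l_s}\cdots f_{\beta_k}^{l_k}\in u_\chi(\mathfrak u'_s)$ by construction, and $f_{\beta_j}\in\mathfrak u'_s\subseteq u_\chi(\mathfrak u'_s)$ because $j\geq s$. Therefore $f_{\beta_j}f_{\beta_s}^{l_s}\cdots f_{\beta_k}^{l_k}\in u_\chi(\mathfrak u'_s)$, which is precisely a linear combination of the $f_{\beta_s}^{i_s}\cdots f_{\beta_k}^{i_k}$, as asserted.

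The one step requiring care is the closedness verification, and specifically the bookkeeping that a sum of two of the $\beta_i$ with $i\geq s$ lands at an index strictly beyond them; the ascending-height ordering is exactly what makes this work, so no index below $s$ can ever be produced. If a hands-on argument were preferred instead, the same observation drives a straightening induction that commutes $f_{\beta_j}$ rightward into position, each bracket $[f_{\beta_j},f_{\beta_i}]$ contributing only vectors $f_{\beta_m}$ with $m>j$ — but the subalgebra formulation packages all of this at once and avoids the induction entirely.
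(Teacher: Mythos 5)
Your proof is correct and follows essentially the same route as the paper's: you show $\{\beta_s,\dots,\beta_k\}$ is closed using the fact that heights add under root addition together with the ascending-height ordering, conclude that $\mathfrak u'_s=\langle f_{\beta_s},\dots,f_{\beta_k}\rangle$ is a restricted subalgebra, and then apply \cite[Theorem 3.3.1]{sf} to identify $u_{\chi}(\mathfrak u'_s)$ inside $u_{\chi}(\mathfrak u')$ with the stated monomial basis. Your write-up is merely more explicit than the paper's about why no index below $s$ can arise; the underlying argument is identical.
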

\begin{proof} For $\beta_i, \beta_j$ in $\Phi^+\setminus \Phi^+_I$,
if $\beta_i+\beta_j$ is also a root, then its height is greater than those of both $\beta_i$ and $\beta_j$. This implies that
$$\mathfrak u'_s=:\langle f_{\beta_s}, \dots, f_{\beta_k}\rangle$$ is a (restricted) Lie subalgebra of $\mathfrak u'$.  Then \cite[Theorem 3.3.1]{sf} says that $u_{\chi}(\mathfrak u'_s)$,  viewed as a subalgebra of $u_{\chi}(\mathfrak u')$,   has a basis $$f_{\beta_s}^{l_s}\cdots f_{\beta_k}^{l_k},\quad 0\leq l_s, \dots, l_k\leq p-1.$$ Then the lemma follows.
\end{proof}
\begin{lemma} For  $\a\in I$,   the element $[e_{\a}, f_{\beta_1}^{p-1}\cdots f_{\beta_k}^{p-1}]\in u_{\chi}(\g)$ equals a linear combination of basis vectors  $$f_{\beta_1}^{l_1}\cdots f_{\beta_k}^{l_k},\quad 0\leq l_i\leq p-1$$ of $u_{\chi}(\mathfrak u')$ such that $l_i<p-1$ for some $i=1,\dots, k$.\end{lemma}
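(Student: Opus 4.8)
The plan is to read the statement as two separate assertions: first, that the commutator $[e_\a, f_{\beta_1}^{p-1}\cdots f_{\beta_k}^{p-1}]$ lies in $u_{\chi}(\mathfrak u')$ at all, so that it admits an expansion in the PBW basis $f_{\beta_1}^{l_1}\cdots f_{\beta_k}^{l_k}$; and second, that the single monomial in which every exponent equals $p-1$ does not occur. I would prove the first by a direct derivation computation, and the second by a weight argument that, as it turns out, is insensitive to the reductions $f_{\beta}^p=\chi(f_{\beta})^p$ that may occur along the way.

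For the first assertion, since $[e_\a,-]$ acts as a derivation of the associative algebra $u_{\chi}(\g)$, I would expand
$$[e_\a, f_{\beta_1}^{p-1}\cdots f_{\beta_k}^{p-1}]=\sum_{s=1}^k f_{\beta_1}^{p-1}\cdots f_{\beta_{s-1}}^{p-1}\,[e_\a,f_{\beta_s}^{p-1}]\,f_{\beta_{s+1}}^{p-1}\cdots f_{\beta_k}^{p-1},$$
and analyse the inner bracket $[e_\a,f_{\beta_s}]$. This element has $\h$-weight $\a-\beta_s$, hence is either $0$ or a scalar multiple of a root vector for $\a-\beta_s$. Since $\a\in I$ while $\beta_s\in\Phi^+\setminus\Phi^+_I$ carries a strictly positive coefficient on some simple root outside $I$, the difference $\a-\beta_s$, if a root, must be negative, and its opposite $\beta_s-\a$ again lies in $\Phi^+\setminus\Phi^+_I$. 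Thus $[e_\a,f_{\beta_s}]\in\{0\}\cup\mathbf F f_{\beta_s-\a}\subseteq\mathfrak u'$. Because $\Phi^+\setminus\Phi^+_I$ is a closed subset, $\mathfrak u'$ is a restricted Lie subalgebra and $u_{\chi}(\mathfrak u')$ is a subalgebra of $u_{\chi}(\g)$; therefore each $[e_\a,f_{\beta_s}^{p-1}]$, being assembled from iterated brackets of the $f_{\beta_s}$ with $f_{\beta_s-\a}$, lies in $u_{\chi}(\mathfrak u')$, and so does the whole sum.

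For the second assertion I would use the weight space decomposition of $u_{\chi}(\g)$ under the adjoint action of $\h$. This grading descends to $u_{\chi}(\g)$ because the defining relations $f_{\beta}^p=\chi(f_{\beta})^p$ are weight-homogeneous: the left side has weight $-p\beta$ and the right side weight $0$, and $-p\beta=0$ in $\h^*$ since $p=0$ in $\mathbf F$. Now $e_\a$ and $f_{\beta_1}^{p-1}\cdots f_{\beta_k}^{p-1}$ are weight vectors, so their commutator is a weight vector of weight $\a-(p-1)\sum_i\beta_i$. The top monomial $f_{\beta_1}^{p-1}\cdots f_{\beta_k}^{p-1}$ itself has weight $-(p-1)\sum_i\beta_i$, differing from the former by $\a\neq 0$ in $\h^*$. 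Since the PBW monomials form a basis of $u_{\chi}(\mathfrak u')$ consisting of weight vectors, only those of weight $\a-(p-1)\sum_i\beta_i$ can appear with nonzero coefficient; the top monomial is thereby excluded, so every surviving term has $l_i<p-1$ for some $i$.

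The step I expect to require the most care is reconciling the two assertions with the possibility that genuine $p$-th powers arise during the computation: when $\beta_s-\a$ coincides with some $\beta_i$, commuting the resulting $f_{\beta_s-\a}$ past the block $f_{\beta_i}^{p-1}$ can raise an exponent to $p$ and force the reduction $f_{\beta_i}^p=\chi(f_{\beta_i})^p$ to a scalar. The point to emphasise is that this never disturbs the argument: membership in $u_{\chi}(\mathfrak u')$ is preserved because scalars lie there, and the weight bookkeeping is carried out entirely at the level of the $\h^*$-grading, where the reduction is homogeneous of weight $0$. One therefore never needs to track the individual monomials produced, only the single weight carried by the whole commutator.
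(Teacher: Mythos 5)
Your strategy---splitting the lemma into (i) membership of $[e_{\alpha}, f_{\beta_1}^{p-1}\cdots f_{\beta_k}^{p-1}]$ in $u_{\chi}(\mathfrak u')$ and (ii) exclusion of the top monomial by a weight argument---is genuinely different from the paper's proof, which is a hands-on PBW reordering: it expands $[e_{\alpha},f_{\beta_i}^{p-1}]$, moves the resulting $f_{\beta_i-\alpha}$ into position, and controls the correction terms with Lemma 2.3 and the reduction $f^p=\chi(f)^p$. Your part (i) is correct. But part (ii), which carries the entire content of the lemma, rests on the bare assertion that $\alpha\neq 0$ in $\mathfrak h^*$, and that is a genuine gap: in characteristic $p$ the differential of a root can vanish identically on $\mathfrak h$. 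For $\mathfrak{sl}_2$ with $p=2$ one has $\alpha=2\omega$, hence $d\alpha=0$; in such a situation your argument places the commutator in the same $\mathfrak h$-weight space as $f_{\beta_1}^{p-1}\cdots f_{\beta_k}^{p-1}$ and excludes nothing. So the nonvanishing must be derived from the standing hypotheses, not assumed. (A milder instance of the same issue: you justify $[e_{\alpha},f_{\beta_s}]\in\mathbf F f_{\beta_s-\alpha}$ by its $\mathfrak h$-weight, but in characteristic $p$ distinct root vectors can share an $\mathfrak h$-weight; this step should instead invoke the $T$-weight decomposition of $\g$ or the Chevalley basis relations recorded in the paper's Remark.)

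The gap is fixable in two ways. One can prove $d\alpha\neq 0$ from (H1) and (H3): if $d\alpha=0$, then invariance of the form $\kappa$ of (H3) gives $\kappa(h_{\alpha},h)=\kappa([e_{\alpha},f_{\alpha}],h)=\alpha(h)\,\kappa(f_{\alpha},e_{\alpha})=0$ for all $h\in\mathfrak h$, while $T$-invariance gives $\kappa(h_{\alpha},\g_{\beta})=0$ for every root $\beta$; hence $h_{\alpha}$ lies in the radical of $\kappa$, contradicting non-degeneracy, since $h_{\alpha}$ is a Chevalley basis vector of $\mathrm{Lie}(DG)$ and so is nonzero by (H1). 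Alternatively---and this is cleaner, and is exactly the precaution the paper itself takes in Lemmas 2.7 and 2.8---avoid $\mathfrak h^*$ altogether and run the weight argument in $U(\mathfrak u')$ under the $T$-action, where weights live in the torsion-free lattice $\mathbb Z\Phi$. There the commutator is a weight vector of weight $\alpha-(p-1)\sum_i\beta_i$, so every PBW monomial $f_{\beta_1}^{t_1}\cdots f_{\beta_k}^{t_k}$ ($t_i\in\mathbb N$) occurring in it satisfies $\sum_i t_i\beta_i=(p-1)\sum_i\beta_i-\alpha$; the quotient map to $u_{\chi}(\mathfrak u')$ sends such a monomial to a scalar multiple of the basis monomial with exponents $t_i\bmod p$, and a tuple with all $t_i\equiv p-1 \pmod p$ would force $-\alpha=p\sum_i m_i\beta_i$ with $m_i\in\mathbb N$, which is impossible in $\mathbb Z\Phi$. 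With either repair your proof is complete, and it is an attractive alternative to the paper's computation; in particular the same argument, with $\alpha$ replaced by $-\alpha$, proves Lemma 2.5 verbatim.
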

\begin{proof}
If $[e_{\a}, f_{\beta_i}]\neq 0$ for some $i$, then since $\a$ is a simple root, we have $$[e_{\a}, f_{\beta_i}]=f_{\beta_i-\a}$$ with $\beta_i-\a=\beta_j$ for some $j=1,\dots, k$.  Clearly we have $\text{ht} (\beta_i-\a)<\text{ht}{\beta_i}$. For  $$[e_{\a}, f_{\beta_1}^{p-1}\cdots f_{\beta_k}^{p-1}]=\sum _{i,\ [e_{\a}, f_{\beta_i}]\neq 0} f_{\beta_1}^{p-1}\cdots f_{\beta_i-\a}^{p-1}\cdots (\sum_{s=0}^{p-2}f_{\beta_i}^sf_{\beta_i-\a}f_{\beta_i}^{p-2-s})\cdots f_{\beta_k}^{p-1},$$
each summand equals $$(p-1)f_{\beta_1}^{p-1}\cdots f_{\beta_i-\a}^{p}\cdots f_{\beta_i}^{p-2}\cdots f_{\beta_k}^{l_k}$$$$+\sum_{s=0}^{p-2}f_{\beta_1}^{p-1}\cdots f_{\beta_i-\a}^{p-1}[\cdots f_{\beta_i}^s, f_{\beta_i-\a}]f_{\beta_i}^{p-2-s}\cdots f_{\beta_k}^{l_k}.$$
Since $f_{\beta_i-\a}^p= \chi(f_{\beta_i-\a})^p$, the first term above is of the desired form.  For each term in the summation above, if there is $\beta_t$ between  $\beta_i-\a$ and $\beta_i$ (including $\beta_i$) such that $$f_{\beta}=:[f_{\beta_t}, f_{\beta_i-\a}]\neq 0,$$
so that $\text{ht}\beta >\text{ht}\beta_t$, then by Lemma 2.3 this term  equals a linear combination of elements $f_{\beta_1}^{l_1}\cdots f_{\beta_t}^{l_t}\cdots f_{\beta_k}^{l_k}$, $0\leq l_i\leq p-1$, with especially $l_t<p-1$. Thus, the lemma holds.

\end{proof}
\begin{lemma}For $\a\in I$,  $[f_{\a}, f_{\beta_1}^{p-1}\cdots f_{\beta_k}^{p-1}]\in u_{\chi}(\g)$ equals a linear combination of of basis vectors  $$f_{\beta_1}^{l_1}\cdots f_{\beta_k}^{l_k},\quad 0\leq l_i\leq p-1$$ of $u_{\chi}(\mathfrak u')$ such that $l_i<p-1$ for some $i=1,\dots, k$.\end{lemma}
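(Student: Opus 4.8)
The plan is to follow the proof of Lemma 2.4 almost verbatim, the only structural change being that $\text{ad}(f_{\a})$ \emph{raises} heights instead of lowering them, so the root vector it produces must travel to the \emph{right}. First I would expand the bracket using that $\text{ad}(f_{\a})$ is a derivation of the associative algebra $u_{\chi}(\g)$:
$$[f_{\a}, f_{\beta_1}^{p-1}\cdots f_{\beta_k}^{p-1}]=\sum_{i,\ [f_{\a}, f_{\beta_i}]\neq 0}f_{\beta_1}^{p-1}\cdots\Big(\sum_{s=0}^{p-2}f_{\beta_i}^s[f_{\a}, f_{\beta_i}]f_{\beta_i}^{p-2-s}\Big)\cdots f_{\beta_k}^{p-1}.$$
For each contributing $i$ the Remark gives that $[f_{\a}, f_{\beta_i}]$ is a nonzero scalar multiple of $f_{\beta_i+\a}$; moreover $\beta_i+\a\in\Phi^+\setminus\Phi^+_I$ (otherwise $\beta_i=(\beta_i+\a)-\a$ would lie in $\mathbb{Z}I\cap\Phi=\Phi_I$), so $\beta_i+\a=\beta_j$ for some $j$, and $\text{ht}(\beta_j)>\text{ht}(\beta_i)$ forces $j>i$ in the ascending-height ordering. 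Suppressing the scalar as in the Remark, I write $[f_{\a}, f_{\beta_i}]=f_{\beta_j}$.

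Next I would push the newly created factor $f_{\beta_j}$ to the right, past $f_{\beta_i}^{p-2-s}$ and the intervening blocks, toward the $\beta_j$-slot. Commuting $f_{\beta_j}$ freely produces the leading term
$$(p-1)\,f_{\beta_1}^{p-1}\cdots f_{\beta_i}^{p-2}\cdots f_{\beta_j}^{p}\cdots f_{\beta_k}^{p-1},$$
using $\sum_{s=0}^{p-2}f_{\beta_i}^s f_{\beta_i}^{p-2-s}=(p-1)f_{\beta_i}^{p-2}$. Since $f_{\beta_j}$ is a root vector we have $f_{\beta_j}^p=\chi(f_{\beta_j})^p$, a scalar, so this leading term is a scalar multiple of a single basis vector with $l_i=p-2<p-1$ (and $l_j=0$), hence of the desired form.

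The remaining correction terms arise from the commutators collected while moving $f_{\beta_j}$ to the right past a block $f_{\beta_t}^{p-1}$ with $i\leq t<j$. Whenever $[f_{\beta_t}, f_{\beta_j}]\neq 0$ it is a multiple of $f_{\beta_t+\beta_j}$ with $\text{ht}(\beta_t+\beta_j)>\text{ht}(\beta_t)$, and picking it up drops the exponent of the $\beta_t$-block below $p-1$. Invoking Lemma 2.3 to rewrite the resulting higher-height tail in the PBW basis of $u_{\chi}(\mathfrak u')$ then leaves this exponent $l_t<p-1$ untouched, because the rearrangement only manufactures root vectors of height $\geq\text{ht}(\beta_t+\beta_j)>\text{ht}(\beta_t)$. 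Hence every correction term is a linear combination of basis vectors with some $l_t<p-1$, and summing the leading and correction contributions yields the claim. The step needing the most care is exactly this last one, as in Lemma 2.4: verifying that the reordering via Lemma 2.3 never restores a lowered exponent back up to $p-1$. This is precisely where the strict increase of height under addition of roots in $\Phi^+\setminus\Phi^+_I$ is indispensable, and it is also what makes the $f_{\a}$ case genuinely parallel to, rather than harder than, the $e_{\a}$ case.
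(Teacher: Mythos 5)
Your proof is correct and is essentially the paper's own argument: expand the bracket by the derivation property, observe that $[f_{\alpha}, f_{\beta_i}]=f_{\beta_j}$ for some $j>i$ since adding $\alpha\in I$ strictly raises height, so each summand has $\beta_i$-exponent $p-2$, and then invoke Lemma 2.3 together with the fact that further reordering only creates root vectors of strictly larger height, so no exponent climbs back to $p-1$. The paper's version is simply terser, letting Lemma 2.3 absorb your explicit leading-term (with $f_{\beta_j}^p=\chi(f_{\beta_j})^p$) and correction-term bookkeeping.
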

\begin{proof}If $[f_{\a}, f_{\beta_i}]\neq 0$ for some $i$, so that $[f_{\a}, f_{\beta_i}]=f_{\beta_j}$ for some $j>i$ since $\text{ht}\beta_j>\text{ht}\beta_i$. Applying Lemma 2.3 we see that $$f_{\beta_1}^{p-1}\cdots [f_{\a}, f_{\beta_i}^{p-1}]\cdots f_{\beta_k}^{p-1}$$ is a linear combination of $f_{\beta_1}^{l_1}\cdots f_{\beta_i}^{l_i}\cdots f_{\beta_k}^{l_k}$ with $0\leq l_1,\dots, l_i,\dots, l_k\leq p-1$, and in particular, $l_i=p-2<p-1$. Then the lemma follows.
\end{proof}
By a similar proof as that for \cite[Lemma 3.6]{zh}, we obtain the following lemma.
\begin{lemma} There is a uniquely determined scalar $R^I_{\g}(\lambda)\in \mathbf F$ such that
$$e_{\beta_1}^{p-1}\cdots e_{\beta_k}^{p-1}f_{\beta_1}^{p-1}\cdots f_{\beta_k}^{p-1}\otimes v_{\l}=R^I_{\g}(\l)\otimes v_{\lambda}$$ in $Z^{\chi}_I(\lambda)$.
\end{lemma}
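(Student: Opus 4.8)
The plan is to recognize the operator in question as a $\tilde\g_I$-module endomorphism of the simple module $L^\chi_I(\lambda)$ and to finish by Schur's lemma. Write $E=e_{\beta_1}^{p-1}\cdots e_{\beta_k}^{p-1}$ and $F=f_{\beta_1}^{p-1}\cdots f_{\beta_k}^{p-1}$, and consider the $\mathbf F$-linear map $\Theta\colon L^\chi_I(\lambda)\to Z^\chi_I(\lambda)$, $\Theta(v)=EF\otimes v$. Since $E$ and $F$ carry opposite $\mathfrak h$-weights $\pm(p-1)\sum_i\beta_i$, we have $[h,EF]=0$ for every $h\in\mathfrak h$; in particular $\Theta(v)$ has the same weight as $v$, so $EF\otimes v_\lambda$ lies in the $\lambda$-weight space of $Z^\chi_I(\lambda)$. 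My goal is to show that $\Theta$ is $\tilde\g_I$-equivariant with image inside $1\otimes L^\chi_I(\lambda)$; then, identifying the $\tilde\g_I$-submodule $1\otimes L^\chi_I(\lambda)$ with $L^\chi_I(\lambda)$, Schur's lemma forces $\Theta$ to be multiplication by a unique scalar, which I name $R^I_\g(\lambda)$, and evaluating at $v_\lambda$ gives the assertion.

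For equivariance it suffices to test on the generators $h\in\mathfrak h$ and $e_\alpha,f_\alpha$ with $\alpha\in I$ of $\tilde\g_I$. The case of $\mathfrak h$ is the computation above. For $x=e_\alpha$ or $x=f_\alpha$ with $\alpha\in I$, Corollary 2.2 gives $[x,E]=0$, so that
\[
x\cdot\Theta(v)=E(xF)\otimes v=E[x,F]\otimes v+EF\otimes(xv)=E[x,F]\otimes v+\Theta(xv).
\]
Thus equivariance amounts to the vanishing of the correction term $E[x,F]\otimes v$. By Lemmas 2.4 and 2.5, $[x,F]$ is a linear combination of monomials $f_{\beta_1}^{l_1}\cdots f_{\beta_k}^{l_k}$ in which some exponent satisfies $l_i<p-1$, so it would remain to verify that $E$ annihilates every such ``deficient'' monomial applied to $v$. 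This is the $k$-fold analogue of the rank-one identity $e_\beta^{p-1}f_\beta^{\,l}\otimes v=0$ for $l<p-1$, which holds because, $\chi$ vanishing on $\mathfrak n^+$, the operator $e_\beta^{p-1}$ already kills a lowest vector after fewer than $p-1$ steps.

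To place the image in $1\otimes L^\chi_I(\lambda)$, I would use Lemma 2.1 in the form $e_{\beta_i}E=0$ for each $i$ (inserting $e_{\beta_i}$ into the product $E$, all of whose exponents equal $p-1$, yields $0$). Hence $\mathfrak u$ annihilates $\Theta(v)$ for every $v$, and a parallel straightening of $E(F\otimes v)$ in the ascending-height ordering should reduce it to its fully contracted part, in which each $e_{\beta_i}^{p-1}$ is paired against the matching $f_{\beta_i}^{p-1}$, leaving a vector of $u_\chi(\mathfrak u')$-degree zero, i.e.\ an element of $1\otimes L^\chi_I(\lambda)$. Combined with equivariance this gives $\Theta\bigl(L^\chi_I(\lambda)\bigr)\subseteq 1\otimes L^\chi_I(\lambda)$ and completes the Schur argument.

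The main obstacle is precisely the two straightening claims above: that $E$ kills every deficient monomial and that $E(F\otimes v)$ collapses to $u_\chi(\mathfrak u')$-degree zero. Because the $\mathfrak h$-weights of a $u_\chi(\g)$-module are only well defined modulo $p$, neither claim can be read off from weight bookkeeping alone—there genuinely are degree-positive basis vectors of weight $\lambda$—so one must carry out the ordered commutator computation directly, handling the cross terms $[e_{\beta_i},f_{\beta_j}]$ by the closedness of $\Phi^+\setminus\Phi^+_I$ and the ascending-height arrangement of $\beta_1,\dots,\beta_k$, exactly as in the proof of \cite[Lemma 3.6]{zh}.
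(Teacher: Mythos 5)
Your Schur's-lemma framing is viable in principle---and would even yield the stronger statement that $EF\otimes v=R^I_{\g}(\lambda)\otimes v$ for \emph{every} $v\in L^{\chi}_I(\lambda)$, which the paper obtains only later by combining this lemma with Lemma 2.9---but as written the proposal has a genuine gap: the two ``straightening claims'' you defer at the end are not loose ends, they are the entire mathematical content of the lemma, and neither is proved. The first one (that $E=e_{\beta_1}^{p-1}\cdots e_{\beta_k}^{p-1}$ annihilates $f_{\beta_1}^{l_1}\cdots f_{\beta_k}^{l_k}\otimes v$ whenever some $l_i<p-1$) is exactly Lemma 2.8 of the paper, proved there independently of the present lemma, so citing it would be legitimate; but your heuristic for it (a ``rank-one identity'' about $e_{\beta}^{p-1}$ killing a lowest vector) is not an argument. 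The second claim, that $EF\otimes v$ lands in $1\otimes L^{\chi}_I(\lambda)$, is nowhere established: saying that a ``parallel straightening should reduce it to its fully contracted part'' merely restates what must be shown. So the proposal repackages the lemma through Schur's lemma without supplying its core.

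Worse, your closing methodological claim---that since $\mathfrak h$-weights in a $u_{\chi}(\g)$-module are only defined mod $p$, ``neither claim can be read off from weight bookkeeping alone'' and one must carry out the ordered commutator computation directly---points in exactly the wrong direction. The proof the paper invokes (that of \cite[Lemma 3.6]{zh}, the same mechanism as Lemmas 2.7 and 2.8 here) \emph{is} weight bookkeeping, done before reduction mod $p$: in $U(\g)$ write $EF=\sum_i u_i^-u_i^0u_i^+$ by PBW, with $u_i^{\pm}$ monomials in $U(\mathfrak n^{\mp})$ and $u_i^0\in U(\mathfrak h)$; under the adjoint $T$-action the weights are honest elements of $\mathbb Z\Phi$, and $\mathrm{wt}(EF)=0$, so every PBW term has weight $0$. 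Now apply to $\otimes\, v_{\lambda}$: since $u_i^0u_i^+$ maps into $u_{\chi}(\mathfrak p_I)$, it passes across the tensor sign; any term with $u_i^+\neq 1$ dies because $v_{\lambda}$ is a maximal vector, and any term with $u_i^+=1$ has $\mathrm{wt}(u_i^-)=0$, which for a monomial in $U(\mathfrak n^-)$ forces $u_i^-=1$. What survives is $\bigl(\sum_i\lambda(u_i^0)\bigr)\otimes v_{\lambda}$, which is the lemma, with uniqueness immediate from $1\otimes v_{\lambda}\neq 0$. No straightening of $EF$, and indeed no Schur's lemma, is needed; conversely, without this $U(\g)$-level weight argument (or an equivalent substitute) your proposal does not close.
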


Let $\chi\in \g^*$ as given earlier.
 Then $\chi$ can be written as $\chi=\chi_s+\chi_n$, with $\chi_s(\mathfrak n^++\mathfrak n^-)=0$ and $\chi_n(\mathfrak h+\mathfrak n^+)=0$.
 For each simple $u_{\chi_s}(\mathfrak p_I)$-module $L^{\chi_s}_I(\lambda)$ ($\lambda\in \mathfrak h^*$), define the induced module $$Z^{\chi_s}_I(\lambda)=u_{\chi_s}(\g)\otimes_{u_{\chi_s}(\mathfrak p_I)}L^{\chi_s}_I(\lambda).$$ Let $v_{\l}\in L^{\chi_s}_I(\lambda)$ be a maximal vector of weight $\lambda$. In a similar way as above we define the scalar $R^I_{\g}(\lambda)_s$ by $$e_{\beta_1}^{p-1}\cdots e_{\beta_k}^{p-1}f_{\beta_1}^{p-1}\cdots f^{p-1}_{\beta_k}\otimes v_{\lambda}=R^I_{\g}(\lambda)_s\otimes v_{\lambda}.$$ We have
 $R^I_{\g}(\lambda)_s=R^I_{\g}(\lambda)$ for any $\lambda\in \mathfrak h^*$ by a similar proof as that for  \cite[Lemma 4.1]{zh}. Therefore, in calculating $R^I_{\g}(\lambda)$, we may assume $\chi=\chi_s$.
 By \cite[Lemma 4.1]{zh}, there is nonzero $c\in\mathbf F$ (independent of $\l$) such that $$R^I_{\g}(\lambda)=c\Pi^{k}_{i=1}[(\lambda+\rho)(h_{\beta_i})^{p-1}-1],\quad \l\in \mathfrak h^*.$$

We now put the basis elements  of $\mathfrak n^+$ in the order $$e_{\a_1}, \dots, e_{\a_s}, e_{\beta_1}, \dots, e_{\beta_k},$$ for which $\{\a_1, \dots, \a_s\}=\Phi^+_I.$\par Let $U(\g)$ (resp. $U(\mathfrak h)$; $U(\mathfrak n^+)$) be the universal enveloping algebra of $\mathfrak g$ ($\mathfrak h$; $\mathfrak n^+$). Then $U(\g)$ is naturally a $T$-module with the adjoint action. Let us denote the $T$-weight of a weight vector $u\in U(\g)$ by $\text{wt} (u)$.\par
 Assume $l=(l_1,\dots, l_k)\in \mathbb N^k$ with $l_i\leq p-1$.  Then we have in $U(\g)$ $$x=e_{\beta_1}^{p-1}\cdots e_{\beta_k}^{p-1}f_{\beta_1}^{l_1}\cdots f_{\beta_k}^{l_k}=\sum _i u_i^-u^0_iu^+_i,\quad u^-_i\in U(\mathfrak n^-),\ u^0_i\in U(\mathfrak h),\ u^+_i\in U(\mathfrak n^+).$$
Under the $T$-action, the weight of $x$ is $$\text{wt}(x)=\text{wt}(u_i^+) +\text{wt}(u^-_i)=\sum_{i,\ l_i<p-1}(p-1-l_i)\beta_i.$$
\begin{lemma}Keep the assumptions above. If one of the exponents $l_1,\dots, l_k$ is strictly less than $p-1$, then for each $i$ we have $$u_i^+=e_{\a_1}^{j_1}\cdots e_{\a_s}^{j_s}e_{\beta_1}^{t_1}\cdots e_{\beta_k}^{t_k},\quad  j_1, \dots, j_s,\ t_1,\dots, t_k\in \mathbb N$$ such that \ $t_q>0$ for some $q=1,\dots, k$.
\end{lemma}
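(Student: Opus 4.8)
The plan is to establish the statement by a $T$-weight argument driven by the functional that records the total multiplicity of the simple roots lying outside $I$. Concretely, I would introduce the linear map $d\colon\mathbb Z\Phi\to\mathbb Z$ on the root lattice defined by $d\big(\sum_{\gamma\in\Pi}c_{\gamma}\gamma\big)=\sum_{\gamma\in\Pi\setminus I}c_{\gamma}$. Its three defining properties are immediate from the definitions: $d$ vanishes on $\Phi_I=\mathbb ZI\cap\Phi$; it is non-negative on every positive root, since positive roots have non-negative coordinates; and it is strictly positive exactly on $\Phi^+\setminus\Phi^+_I$, because a positive root lies in $\Phi_I$ if and only if all of its coordinates indexed by $\Pi\setminus I$ vanish. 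As a consequence, $d(\text{wt}(u^+))\geq 0$ for every $T$-weight vector $u^+\in U(\mathfrak n^+)$, while $d(\text{wt}(u^-))\leq 0$ for every weight vector $u^-\in U(\mathfrak n^-)$, and $d(\text{wt}(u^0))=0$ for $u^0\in U(\mathfrak h)$.

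Next I would evaluate $d$ on $\text{wt}(x)$. From the weight formula recorded just before the lemma, $d(\text{wt}(x))=\sum_{i,\,l_i<p-1}(p-1-l_i)\,d(\beta_i)$. By hypothesis at least one index satisfies $l_i<p-1$; for such an index one has $(p-1-l_i)>0$ and $\beta_i\in\Phi^+\setminus\Phi^+_I$, so $d(\beta_i)>0$. Since every term of the sum is non-negative, this yields $d(\text{wt}(x))>0$.

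Finally, I would fix one summand $u_i^-u_i^0u_i^+$ of $x$. As noted before the lemma, it is a weight vector of the same weight as $x$, so $\text{wt}(u_i^+)+\text{wt}(u_i^-)=\text{wt}(x)$; applying $d$ and using $d(\text{wt}(u_i^-))\leq 0$ gives $d(\text{wt}(u_i^+))\geq d(\text{wt}(x))>0$. On the other hand, writing $u_i^+$ in the chosen PBW order as $e_{\a_1}^{j_1}\cdots e_{\a_s}^{j_s}e_{\beta_1}^{t_1}\cdots e_{\beta_k}^{t_k}$, so that $\text{wt}(u_i^+)=\sum_m j_m\a_m+\sum_q t_q\beta_q$ with $\{\a_1,\dots,\a_s\}=\Phi^+_I$, and using $d(\a_m)=0$, we obtain $d(\text{wt}(u_i^+))=\sum_q t_q\,d(\beta_q)$. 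Since the left-hand side is strictly positive while each $d(\beta_q)>0$, at least one exponent $t_q$ must be strictly positive, which is exactly the desired conclusion. The argument is essentially pure bookkeeping, and the only point that genuinely needs care is the correct sign behavior of $d$ on the three triangular pieces $U(\mathfrak n^-)$, $U(\mathfrak h)$, $U(\mathfrak n^+)$, together with the already-noted homogeneity of each PBW summand of the weight vector $x$.
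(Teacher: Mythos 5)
Your proof is correct and is essentially the paper's own argument: the paper likewise uses the weight identity $\text{wt}(u_i^+)=-\text{wt}(u_i^-)+\sum_{i,\,l_i<p-1}(p-1-l_i)\beta_i$ and tracks the coefficients of the simple roots in $\Pi\setminus I$, concluding by contradiction that some $t_q>0$. Your functional $d$ just packages that coefficient bookkeeping into a linear map and phrases the argument directly rather than by contradiction; there is no substantive difference.
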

\begin{proof} Suppose otherwise that $t_q=0$ for all $q$. Then we have  $$\text{wt}(u^+_i)=\sum_{i=1}^s j_i\a_i\in \mathbb N I.$$ Note that   $$\text{wt}(u^+_i)=-\text{wt}(u^-_i)+\sum_{i,\ l_i<p-1}(p-1-l_i)\beta_i.$$
Expressed as linear combinations of simple roots $\Pi$, $-\text{wt}(u^-_i)$ has all coefficients nonnegative,
but $$\sum_{i,\ l_i<p-1}(p-1-l_i)\beta_i$$ has at least a positive coefficient for some simple root in $\Pi\setminus I$,  a contradiction.

\end{proof}
\begin{lemma}Keep the assumptions above.  If one of the exponents $l_1,\dots, l_k$ is strictly less than $p-1$,  then we have
in $Z^{\chi}_I(\l)$ that $$e_{\beta_1}^{p-1}\dots e_{\beta_k}^{p-1} f_{\beta_1}^{l_1}\cdots f_{\beta_k}^{l_k}\otimes v=0\quad \mathbin{\mathrm{for \ any}}\ v\in L^{\chi}_I(\l).$$
\end{lemma}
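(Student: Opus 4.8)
The plan is to deduce the statement from Lemma 2.8 together with the fact, recorded at the start of the section, that the nilradical $\mathfrak u$ annihilates $L^{\chi}_I(\l)$. Write
$$x=e_{\beta_1}^{p-1}\cdots e_{\beta_k}^{p-1}f_{\beta_1}^{l_1}\cdots f_{\beta_k}^{l_k}\in U(\g).$$
Since by hypothesis one of the exponents $l_1,\dots,l_k$ is strictly less than $p-1$, Lemma 2.8 provides a decomposition $x=\sum_i u_i^-u_i^0u_i^+$ with $u_i^-\in U(\mathfrak n^-)$, $u_i^0\in U(\h)$ and $u_i^+\in U(\mathfrak n^+)$, in which moreover each factor $u_i^+=e_{\a_1}^{j_1}\cdots e_{\a_s}^{j_s}e_{\beta_1}^{t_1}\cdots e_{\beta_k}^{t_k}$ satisfies $t_q>0$ for some $q$. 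Applying the canonical algebra surjection $U(\g)\to u_{\chi}(\g)$, the same identity holds for the image of $x$ in $u_{\chi}(\g)$, which is precisely the element acting in $Z^{\chi}_I(\l)$.

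Next I would use that $\mathfrak n^+=\sum_{\a\in\Phi^+_I}\g_{\a}+\mathfrak u\subseteq\tilde\g_I+\mathfrak u=\mathfrak p_I$ and that $\h\subseteq\tilde\g_I\subseteq\mathfrak p_I$. Hence each product $u_i^0u_i^+$ maps into the subalgebra $u_{\chi}(\mathfrak p_I)$ of $u_{\chi}(\g)$ and can be moved across the tensor product defining $Z^{\chi}_I(\l)$. Thus, for any $v\in L^{\chi}_I(\l)$,
$$e_{\beta_1}^{p-1}\cdots e_{\beta_k}^{p-1}f_{\beta_1}^{l_1}\cdots f_{\beta_k}^{l_k}\otimes v=\sum_i u_i^-\otimes\big(u_i^0u_i^+\cdot v\big),$$
so it is enough to show that $u_i^+\cdot v=0$ for every $i$.

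To see this I would factor $u_i^+=m_{\a}m_{\beta}$ with $m_{\a}=e_{\a_1}^{j_1}\cdots e_{\a_s}^{j_s}$ and $m_{\beta}=e_{\beta_1}^{t_1}\cdots e_{\beta_k}^{t_k}$, where $m_{\beta}\in U(\mathfrak u)$ has positive total degree because some $t_q>0$. Taking $q^\ast$ to be the largest index with $t_{q^\ast}>0$, the factors to the right of $e_{\beta_{q^\ast}}^{t_{q^\ast}}$ all have exponent zero, so $m_{\beta}$ ends in $e_{\beta_{q^\ast}}$; since $e_{\beta_{q^\ast}}\in\mathfrak u$ and $\mathfrak u$ annihilates $L^{\chi}_I(\l)$, we get $e_{\beta_{q^\ast}}\cdot v=0$ and therefore $m_{\beta}\cdot v=0$. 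Consequently $u_i^+\cdot v=m_{\a}\cdot(m_{\beta}\cdot v)=0$, and substituting back yields $e_{\beta_1}^{p-1}\cdots e_{\beta_k}^{p-1}f_{\beta_1}^{l_1}\cdots f_{\beta_k}^{l_k}\otimes v=0$.

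I expect no genuine obstacle here: the essential work, namely that each $u_i^+$ is forced to involve the nilradical nontrivially, is exactly the content of Lemma 2.8, and the present statement follows as a clean corollary. The only points requiring care are bookkeeping ones — confirming that the PBW decomposition of Lemma 2.8 persists after projecting to $u_{\chi}(\g)$, and that $u_i^0u_i^+$ genuinely lies in $u_{\chi}(\mathfrak p_I)$ so that it may be transported across the tensor product.
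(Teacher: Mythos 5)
Your proof is correct and follows essentially the same route as the paper's: the PBW decomposition whose positive parts $u_i^+$ are each forced to contain some $e_{\beta_q}$ with $t_q>0$ (this is Lemma 2.7 --- your citation ``Lemma 2.8'' is a slip, since that is the statement being proved), combined with the fact that $\mathfrak{u}$ annihilates $L^{\chi}_I(\lambda)$. Your only deviation is a harmless streamlining: by transporting $u_i^0u_i^+$ into $u_{\chi}(\mathfrak{p}_I)$ and letting it act on $v$ directly, you bypass the paper's case distinction on whether some exponent of $\bar{u}_i^+$ reaches $p$ (in which case $\bar{u}_i^+=0$ in $u_{\chi}(\mathfrak{g})$, since $\chi(\mathfrak{n}^+)=0$ and root vectors are $[p]$-nilpotent), which is legitimate because the $U(\mathfrak{p}_I)$-action on $L^{\chi}_I(\lambda)$ factors through $u_{\chi}(\mathfrak{p}_I)$ in any case.
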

\begin{proof} By our assumptions, we have in $U(\g)$ that $$e_{\beta_1}^{p-1}\dots e_{\beta_k}^{p-1} f_{\beta_1}^{l_1}\cdots f_{\beta_k}^{l_k}=\sum  u^-_i u^0_i u^+_i,\ u^-_i\in U(\mathfrak n^-),\ u^0_i\in U(\mathfrak h),\ u^+_i\in U(\mathfrak n^+),$$ where by Lemma 2.7 each $u^+_i$ is of the form $$e_{\a_1}^{j_1}\cdots e_{\a_s}^{j_s}e_{\beta_1}^{t_1}\cdots e_{\beta_k}^{t_k}$$ with $t_q>0$ for some $q$. Then we have in $u_{\chi}(\g)$ that $$e_{\beta_1}^{p-1}\dots e_{\beta_k}^{p-1} f_{\beta_1}^{l_1}\cdots f_{\beta_k}^{l_k}=\sum  \bar u^-_i \bar u^0_i \bar u^+_i, \quad \bar u^-_i\in u_{\chi}(\mathfrak n^-),\ \bar u^0_i\in u_{\chi}(\mathfrak h),\ u^+_i\in u(\mathfrak n^+).$$
For each $u_i^+$ of the  above form,  we have $\bar u^+_i=0$ if one of the exponents $$j_1, \dots, j_s, t_1,\dots, t_k$$ is greater than or equal to $p$. If all these exponents are less than $p$, then $\bar u^+_i$ is a basis vector of $u(\mathfrak n^+)$ of the same form as $u^+_i$, and hence $\bar u^+_i\otimes v=0$ since one of the $t_q$ is positive.  Therefore we have   $\bar u^+_i\otimes v=0$ for all $i$, so the lemma follows.
\end{proof}

\begin{lemma} Keep the assumptions above. Let $v\in L^{\chi}_I(\l)$. For any $\a\in I$, we have in $Z^{\chi}_I(\l)$ that
$$(1) \quad e_{\a}e_{\beta_1}^{p-1}\cdots e_{\beta_k}^{p-1}f_{\beta_1}^{p-1}\cdots f_{\beta_k}^{p-1}\otimes v=e_{\beta_1}^{p-1}\cdots e_{\beta_k}^{p-1}f_{\beta_1}^{p-1}\cdots f_{\beta_k}^{p-1}\otimes e_{\a}v,$$
$$(2) \quad f_{\a}e_{\beta_1}^{p-1}\cdots e_{\beta_k}^{p-1}f_{\beta_1}^{p-1}\cdots f_{\beta_k}^{p-1}\otimes v=e_{\beta_1}^{p-1}\cdots e_{\beta_k}^{p-1}f_{\beta_1}^{p-1}\cdots f_{\beta_k}^{p-1}\otimes f_{\a}v.$$
\end{lemma}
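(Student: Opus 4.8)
The plan is to push $e_{\a}$ (respectively $f_{\a}$) to the right past the two products $E=e_{\beta_1}^{p-1}\cdots e_{\beta_k}^{p-1}$ and $F=f_{\beta_1}^{p-1}\cdots f_{\beta_k}^{p-1}$, so that it ends up acting directly on $v$ inside $L^{\chi}_I(\l)$. Since $\a\in I\subseteq\Pi$ is a simple root we have $\a\in\Phi^+_I$, so Corollary 2.2 is applicable to $\a$; moreover $e_{\a}\in\g_{\a}$ and $f_{\a}\in\g_{-\a}$ both lie in $\tilde\g_I\subseteq\mathfrak p_I$. The crucial consequence of the latter is that in $Z^{\chi}_I(\l)$ one may slide any element of $u_{\chi}(\mathfrak p_I)$ across the tensor sign, so that $EFe_{\a}\otimes v=EF\otimes e_{\a}v$ and $EFf_{\a}\otimes v=EF\otimes f_{\a}v$, the right-hand expressions being legitimate precisely because $e_{\a}v, f_{\a}v\in L^{\chi}_I(\l)$.

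For part $(1)$ I would use the associative Leibniz identity $[e_{\a},EF]=[e_{\a},E]F+E[e_{\a},F]$ to write $e_{\a}EF=EFe_{\a}+[e_{\a},E]F+E[e_{\a},F]$ in $u_{\chi}(\g)$. By Corollary 2.2(1) the middle term vanishes, since $[e_{\a},E]=0$. For the last term, Lemma 2.4 expresses $[e_{\a},F]$ as a linear combination of basis monomials $f_{\beta_1}^{l_1}\cdots f_{\beta_k}^{l_k}$ in which at least one exponent satisfies $l_i<p-1$; applying $E$ on the left and then $\otimes v$, each such monomial is annihilated by Lemma 2.8, so $E[e_{\a},F]\otimes v=0$. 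Hence $e_{\a}EF\otimes v=EFe_{\a}\otimes v=EF\otimes e_{\a}v$, which is exactly $(1)$.

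Part $(2)$ runs along identical lines, replacing $e_{\a}$ by $f_{\a}$ and invoking Corollary 2.2(2) to kill $[f_{\a},E]F$, together with Lemma 2.5 (in place of Lemma 2.4) to guarantee that $[f_{\a},F]$ is a combination of monomials $f_{\beta_1}^{l_1}\cdots f_{\beta_k}^{l_k}$ with some $l_i<p-1$; Lemma 2.8 again annihilates $E[f_{\a},F]\otimes v$, leaving $f_{\a}EF\otimes v=EF\otimes f_{\a}v$.

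There is no deep obstacle here: the argument is essentially an assembly of the preceding lemmas. The one point to handle with a little care is the bookkeeping of the two tensor identifications, namely that $e_{\a}EF\otimes v$ denotes the action of $e_{\a}EF\in u_{\chi}(\g)$ on $1\otimes v$, whereas $EF\otimes e_{\a}v$ is permitted only because $e_{\a}\in\mathfrak p_I$. One should also remember that the Leibniz expansion is being used for the commutator in the associative algebra $u_{\chi}(\g)$, not merely for the Lie bracket; this is what cleanly separates the two outer factors $E$ and $F$ and reduces everything to the vanishing statements already established.
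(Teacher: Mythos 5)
Your proof is correct and follows essentially the same route as the paper: both use Corollary 2.2 to move $e_{\a}$ (resp. $f_{\a}$) past $e_{\beta_1}^{p-1}\cdots e_{\beta_k}^{p-1}$, then Lemma 2.4 (resp. Lemma 2.5) combined with Lemma 2.8 to annihilate the commutator term $e_{\beta_1}^{p-1}\cdots e_{\beta_k}^{p-1}[\,\cdot\,, f_{\beta_1}^{p-1}\cdots f_{\beta_k}^{p-1}]\otimes v$, and finally the fact that $e_{\a}, f_{\a}\in\mathfrak p_I$ to slide across the tensor sign. Packaging the two commutations into a single Leibniz expansion rather than doing them sequentially is only a cosmetic difference.
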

\begin{proof} (1) By Corollary 2.2, the left side equals $$e_{\beta_1}^{p-1}\cdots e_{\beta_k}^{p-1}e_{\a}f_{\beta_1}^{p-1}\cdots f_{\beta_k}^{p-1}\otimes v=e_{\beta_1}^{p-1}\cdots e_{\beta_k}^{p-1}f_{\beta_1}^{p-1}\cdots f_{\beta_k}^{p-1}\otimes e_{\a}v$$$$+e_{\beta_1}^{p-1}\cdots e_{\beta_k}^{p-1}[e_{\a}, f_{\beta_1}^{p-1}\cdots f_{\beta_k}^{p-1}]\otimes v$$ $$=e_{\beta_1}^{p-1}\cdots e_{\beta_k}^{p-1}f_{\beta_1}^{p-1}\cdots f_{\beta_k}^{p-1}\otimes e_{\a}v,$$ where the last equality follows from Lemma 2.4 and 2.8.\par
(2) follows from Corollary 2.2, Lemma 2.5 and 2.8.
\end{proof}
\begin{theorem} The $u_{\chi}(\g)$-module $Z^{\chi}_I(\lambda)$ is simple if $R^I_{\g}(\lambda)\neq 0$.
\end{theorem}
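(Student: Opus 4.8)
The plan is to prove that every nonzero $u_{\chi}(\g)$-submodule $M$ of $Z:=Z^{\chi}_I(\lambda)$ is all of $Z$. Since $L^{\chi}_I(\lambda)=u_{\chi}(\mathfrak p_I)v_{\lambda}$, we have $Z=u_{\chi}(\g)(1\otimes v_{\lambda})$, so it is enough to show $1\otimes v_{\lambda}\in M$. I would deduce this from the stronger claim that $M$ meets the \emph{top layer} $1\otimes L^{\chi}_I(\lambda)$ nontrivially. Indeed, for $x\in\tilde\g_I\subseteq\mathfrak p_I$ one has $x(1\otimes v)=1\otimes xv$, so $1\otimes L^{\chi}_I(\lambda)$ is a $u_{\chi}(\tilde\g_I)$-submodule of $Z$ isomorphic to the simple module $L^{\chi}_I(\lambda)$. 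Hence $M\cap(1\otimes L^{\chi}_I(\lambda))$ is a $u_{\chi}(\tilde\g_I)$-submodule of a simple module; once it is nonzero it must equal $1\otimes L^{\chi}_I(\lambda)$, giving $1\otimes v_{\lambda}\in M$ and therefore $M=Z$.

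Write $E=e_{\beta_1}^{p-1}\cdots e_{\beta_k}^{p-1}$ and $F=f_{\beta_1}^{p-1}\cdots f_{\beta_k}^{p-1}$, and use the PBW decomposition $Z=\bigoplus_{l}f^{l}\otimes L^{\chi}_I(\lambda)$, where $f^{l}=f_{\beta_1}^{l_1}\cdots f_{\beta_k}^{l_k}$ and $0\le l_i\le p-1$; let $B=F\otimes L^{\chi}_I(\lambda)$ be the bottom layer. The operator $E$ is the bridge from $B$ to the top layer. By Lemma 2.8, $E$ annihilates $f^{l}\otimes v$ whenever some $l_i<p-1$, so $E$ kills every layer except $B$. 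On $B$, the assignment $\psi(v)=E(F\otimes v)$ defines a map $L^{\chi}_I(\lambda)\to Z$ which, by Lemma 2.9(1),(2) together with the fact that $EF$ has $T$-weight $0$ (so that it commutes with $\mathfrak h$), is a homomorphism of $u_{\chi}(\tilde\g_I)$-modules. Since $\psi(v_{\lambda})=R^I_{\g}(\lambda)(1\otimes v_{\lambda})$ by Lemma 2.6 and $L^{\chi}_I(\lambda)$ is generated by $v_{\lambda}$, the two $\tilde\g_I$-homomorphisms $\psi$ and $v\mapsto R^I_{\g}(\lambda)(1\otimes v)$ agree on the generator, hence coincide: $E(F\otimes v)=R^I_{\g}(\lambda)(1\otimes v)$ for all $v$. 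Thus $E$ carries $B$ isomorphically, up to the nonzero scalar $R^I_{\g}(\lambda)$, onto $1\otimes L^{\chi}_I(\lambda)$.

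The remaining point, which I expect to be the main obstacle, is to produce from an arbitrary $0\ne z\in M$ an element of $M$ with nonzero component in the bottom layer $B$; applying $E$ then yields a nonzero element of $1\otimes L^{\chi}_I(\lambda)$ inside $M$, finishing the argument. Regarding $Z\cong u_{\chi}(\mathfrak u')\otimes_{\mathbf F}L^{\chi}_I(\lambda)$ as a free $u_{\chi}(\mathfrak u')$-module on $1\otimes v_1,\dots,1\otimes v_n$, write $z=\sum_j y_j(1\otimes v_j)$ with $y_j\in u_{\chi}(\mathfrak u')$ not all zero, and let $\tau\colon u_{\chi}(\mathfrak u')\to\mathbf F$ be the coefficient of the top monomial $F$. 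Then the bottom component of $yz$ equals $\sum_j\tau(yy_j)(F\otimes v_j)$, so it suffices to find $y$ with $\tau(yy_{j_0})\ne 0$ for some $j_0$ with $y_{j_0}\ne 0$. This is precisely the nondegeneracy of the associative form $(y,y')\mapsto\tau(yy')$, i.e.\ the fact that $u_{\chi}(\mathfrak u')$ is a Frobenius algebra with Frobenius functional $\tau$ (\cite{sf}).

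The delicacy is that, when $\chi$ does not vanish on $\mathfrak u'$, straightening products of the $f_{\beta_i}$ via Lemma 2.3 uses the relations $f_{\beta_i}^{p}=\chi(f_{\beta_i})^{p}$, which do not preserve the naive weight grading; the Frobenius property, or equivalently a direct leading-term analysis built on Lemma 2.3, is what guarantees that $\tau$ stays nondegenerate in this case. Granting it, choose $j_0$ with $y_{j_0}\ne 0$ and $y$ with $\tau(yy_{j_0})\ne 0$; since the $v_j$ are linearly independent, the bottom component of $yz$ is nonzero, and by the previous paragraph $Eyz=R^I_{\g}(\lambda)\sum_j\tau(yy_j)(1\otimes v_j)$ is a nonzero element of $M\cap(1\otimes L^{\chi}_I(\lambda))$. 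Hence $M=Z$, and $Z^{\chi}_I(\lambda)$ is simple whenever $R^I_{\g}(\lambda)\ne 0$.
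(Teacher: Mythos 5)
Your proof is correct, and its engine is the same as the paper's: Lemma 2.8 shows that $E=e_{\beta_1}^{p-1}\cdots e_{\beta_k}^{p-1}$ kills every PBW layer except the bottom one $F\otimes L^{\chi}_I(\lambda)$, where $F=f_{\beta_1}^{p-1}\cdots f_{\beta_k}^{p-1}$; Lemmas 2.6 and 2.9 (plus the weight-zero remark for $\mathfrak h$) show that $E$ carries the bottom layer to the top layer $1\otimes L^{\chi}_I(\lambda)$ as $R^I_{\g}(\lambda)$ times a map intertwining the $u_{\chi}(\tilde\g_I)$-action; and simplicity of $L^{\chi}_I(\lambda)$ over $u_{\chi}(\tilde\g_I)$ finishes. (Both you and the paper tacitly use that $\mathfrak h$ together with $e_{\a},f_{\a}$, $\a\in I$, generates $u_{\chi}(\tilde\g_I)$, since Lemma 2.9 is stated only for simple roots; the paper's step $uEF\otimes v_{\hat l}=EF\otimes uv_{\hat l}$ needs this as well.) The genuine divergence is the step that produces, from an arbitrary nonzero element of the submodule, an element with nonzero bottom-layer component. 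The paper does this constructively: multiply by the $f_{\beta_i}$ with $\chi(f_{\beta_i})=0$ in descending height order, then by those with $\chi(f_{\beta_i})\neq 0$ in ascending height order, which forces the coefficient of $F$ to become nonzero. You instead invoke nondegeneracy of the associative form $(y,y')\mapsto\tau(yy')$ on $u_{\chi}(\mathfrak u')$, where $\tau$ is the top-coefficient functional. This is cleaner --- no case split on $\chi(f_{\beta_i})$, no height bookkeeping --- and it rests on a true, standard fact; but note that what you need is not the abstract statement that $u_{\chi}(\mathfrak u')$ is a Frobenius algebra (existence of \emph{some} nondegenerate functional) but that this \emph{specific} $\tau$ is a Frobenius functional, and that is exactly the point you ``grant'' rather than prove, so your writeup delegates the one hard step to a citation of \cite{sf}. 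The gap is easily closed by the leading-term analysis you allude to: with respect to the degree filtration, the associated graded algebra of $u_{\chi}(\mathfrak u')$ is the truncated polynomial ring (the relations $f_{\beta_i}^p=\chi(f_{\beta_i})^p$ and all straightening commutators only contribute lower-degree terms), so if $f^{a}$ is a monomial of maximal total degree occurring in $y_{j_0}$, then $\tau\bigl(f^{b}y_{j_0}\bigr)\neq 0$ for $b=(p-1,\dots,p-1)-a$. Adding those few lines would make your argument self-contained; as written, it trades the paper's elementary but ad hoc saturation trick for a more conceptual argument that would apply verbatim whenever such a bottom-to-top bridge $E$ exists.
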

\begin{proof} Suppose $R^I_{\g}(\lambda)\neq 0$. Let $N$ be a nonzero submodule of $Z_I^{\chi}(\lambda)$, and let $x\in N$ be a nonzero element,  which we can write $$x=\sum_l c_l f_{\beta_1}^{l_1}\cdots f_{\beta_k}^{l_k}\otimes v_l, $$ where the sum is over all tuples $l=(l_1,\dots,l_k)$ with $
0\leq l_i\leq p-1$ and where $c_l\in\mathbf F$ and $0\neq v_l\in L^{\chi}_I(\l)$.\par First, applying  $f_{\beta_i}$'s with $\chi(f_{\beta_i})=0$ in the order of descending heights, then applying  $f_{\beta_i}$'s with $\chi(f_{\beta_i})\neq 0$ in the order of ascending heights,  we get $$x'=\sum c_lf_{\beta_1}^{l_1}\cdots f_{\beta_k}^{l_k}\otimes v_l\in N,\quad 0\leq l_i\leq p-1, $$ where for each $l=(l_1, \dots, l_k)$ with $c_l\neq 0$, we have $l_j=p-1$ if $\chi(f_{\beta_j})=0$,
but we may have $l_i<p-1$ if $\chi(f_{\beta_i})\neq 0$.
 In particular, we have $c_{\hat l}\neq 0$ for $$\hat l=(p-1,\dots, p-1).$$ \par It follows that $$e_{\beta_1}^{p-1}\cdots e_{\beta_k}^{p-1}x'=\sum_l c_l e_{\beta_1}^{p-1}\dots e_{\beta_k}^{p-1} f_{\beta_1}^{l_1}\cdots f_{\beta_k}^{l_k}\otimes v_l\in N.$$

If $l\neq \hat l$, that is, $l_i<p-1$ for some $\beta_i$, then we have by Lemma 2.8 that $$c_l e_{\beta_1}^{p-1}\dots e_{\beta_k}^{p-1} f_{\beta_1}^{l_1}\cdots f_{\beta_k}^{l_k}\otimes v_l=0,$$ and hence
$$e_{\beta_1}^{p-1}\cdots e_{\beta_k}^{p-1}x'=c_{\hat l}e_{\beta_1}^{p-1}\cdots e_{\beta_k}^{p-1}f_{\beta_1}^{p-1}\cdots f_{\beta_k}^{p-1}\otimes v_{\hat l}\in N.$$
Let $v_{\l}\in L^{\chi}_I(\l)$ be a maximal vector  of weight $\l$. Since $L^{\chi}_I(\l)$ is simple as a $u_{\chi}(\tilde\g_I)$-module, there is $$u=\sum u_i^-u^0_iu^+_i\in u_{\chi}(\tilde \g_I)$$ such that $uv_{\hat l}=v_{\l}$. It is clear that $$[h, e_{\beta_1}^{p-1}\cdots e_{\beta_k}^{p-1}f_{\beta_1}^{p-1}\cdots f_{\beta_k}^{p-1}]=0$$ for all $h\in\mathfrak h$. Then by Lemma 2.9 and 2.6 we have $$\begin{aligned} ue_{\beta_1}^{p-1}\cdots e_{\beta_k}^{p-1}f_{\beta_1}^{p-1}\cdots f_{\beta_k}^{p-1}\otimes v_{\hat l}&=e_{\beta_1}^{p-1}\cdots e_{\beta_k}^{p-1}f_{\beta_1}^{p-1}\cdots f_{\beta_k}^{p-1}\otimes uv_{\hat l}\\&=e_{\beta_1}^{p-1}\cdots e_{\beta_k}^{p-1}f_{\beta_1}^{p-1}\cdots f_{\beta_k}^{p-1}\otimes v_{\l}\\ &= R^I_{\g}(\l)\otimes v_{\l}\in N,\end{aligned}$$ implying that $1\otimes v_{\l}\in N$, and hence $N=Z^{\chi}_I(\l)$.  Thus, $Z_I^{\chi}(\lambda)$ is simple.\par

\end{proof}
 In \cite[5.1]{fp2}, Friedlander and Parshall also asked, for $\chi\in\g^*$,  which $u_{\chi}(\g)$-modules are induced from a restricted module for some proper restricted subalgebra of $\g$. \cite[Theorem 1.2]{lsy2} partially answers this question. We now study this question as an application of the
 above theorem.\par
   Keep the notation  from the introduction. We assume that $\chi (\mathfrak h+\mathfrak n^+)=0$. Assume that there is a subset $I$ of $\Pi$ such that $\chi (\mathfrak p_I)=0$. Then $Z^{\chi}_I(\lambda)$ is induced from a restricted simple $\mathfrak p_I$-module.\par
    For example, if $\chi$ is in the standard Levi form, i.e.,
   there is a subset $J\subseteq \Pi$ such that $\chi (f_{\a})\neq 0$ for all $\a\in J$, and $\chi (f_{\a})=0$ for all $\a\in \Phi^+\setminus J$, we take $I=\Pi\setminus J$.\par
   Recall the maximal torus $T$ of $G$ from the introduction. Let $X(T)$ be  the character group of $T$. A weight
   $\tilde\mu\in X(T)$ is $p$-regular if its stabilizer in the affine Weyl group of $\g$ is trivial. By \cite[p.225]{j1}, the differential of $\tilde\mu$ is an element in $\mu\in\mathfrak h^*$ satisfying $\mu (h^{[p]})=\mu (h)^p$ for all \ $h\in\mathfrak h$. \par
  Now let $\lambda\in \mathfrak h^*$ be the differential of a weight $\tilde \lambda\in X(T)$ such that $R^I_{\g}(\lambda)\neq 0$. Using the notation from Section 2, we have $$\Phi^+ \setminus \Phi^+_I=\{\beta_1,\dots, \beta_k\}.$$ Then $R^I_{\g}(\lambda)\neq 0$ implies that $(\lambda+\rho)(h_{\beta_i})=0$ for all $i$, so that the weight $\tilde\lambda$ is not $p$-regular.
   But Theorem 2.10 says that $Z^{\chi}_I(\lambda)$ is simple.   Therefore, $Z^{\chi}_I(\lambda)$ is not included in those constructed in \cite[Theorem 1.2]{lsy2} in the case that $\chi$ is in the standard Levi form.

\def\refname{\centerline{\bf REFERENCES}}

\end{document}